\documentclass[a4paper,12pt,reqno]{amsart}

\usepackage{amssymb,amsmath,amsthm,graphics,amscd,amsfonts}
\pagestyle{plain}
\theoremstyle{plain}
\newtheorem{theorem}{Theorem}[section]

\newtheorem{lemma}[theorem]{Lemma}

\newtheorem{remark}[theorem]{Remark}


\def\a{\alpha}

\def\b{\beta}
\def\bp{\overline{\partial}}

\def\o{\omega}
\def\O{\Omega}

\def\p{\partial}

\def\.{\cdot}

\def\beq{\begin{equation}}
\def\eeq{\end{equation}}
\def\bea{\begin{eqnarray*}}
\def\eea{\end{eqnarray*}}
\def\ba{\begin{array}}
\def\ea{\end{array}}

\def\z{\zeta}

\def\r{\end{proof}}

\def\wt{\widetilde}

\def \ZM{\mathbb{Z}}
\def \CM{\mathbb{C}}


\newcommand{\bfC}{{\mathbb C}}

\newcommand{\barj}{{\overline j}}

\newcommand{\barz}{{\overline z}}

\newcommand{\barpartial}{{\overline \partial}}

\newcommand{\mapright}[1]{\smash{\mathop{   \hbox to 0.7cm{\rightarrowfill}}
  \limits^{#1}}}

\def\tr{{\hbox{tr}}}


\def\div{\mathrm{div}}

\def\Ll{{\mathcal L}}

\def\tr{\mathrm{tr}}
\def\vol{\mathrm{vol}}

\def\6{\partial}

\title{An integral invariant from the view point of locally conformally K\"ahler geometry}
\author{Akito Futaki}
\author{Kota Hattori}
\author{Liviu Ornea}
\thanks{The third author thanks Tokyo Institute of Technology for hospitality during part of the
preparation of this work. }

\address{Department of Mathematics, Tokyo Institute of Technology, 2-12-1, O-okayama,
Meguro, Tokyo 152--8551, Japan} 
\email{futaki@math.titech.ac.jp}
\address{Department of Mathematics, Tokyo Institute of Technology, 2-12-1, O-okayama,
Meguro, Tokyo 152--8551, Japan} 
\email{Kota Hattori <hattori.k.ae@m.titech.ac.jp>}
\address{Univ. of Bucharest, Faculty of Mathematics,
14 Academiei str, 70109 Bucharest, Romania, and 
Institute of Mathematics ``Simion Stoilow" of the Romanian Academy, 21, Calea Grivitei str.,
010702-Bucharest, Romania.}
\email{lornea@gta.math.unibuc.ro, Liviu.Ornea@imar.ro}

\date{May 24, 2011}

\begin{document}
 
\begin{abstract} In this paper we study an integral invariant which obstructs
the existence on a compact complex manifold of a volume form with the determinant of its Ricci form proportional to itself, in particular obstructs the
existence of a K\"ahler-Einstein metric, and has been studied since 1980's. We study this invariant from the view point of locally conformally K\"ahler
geometry. We first see that we can define an integral invariant for coverings of compact complex manifolds with automorphic volume forms.
This situation typically occurs for locally conformally K\"ahler manifolds. 
Secondly, we see that this invariant coincides with the former one. We also show that the invariant
vanishes for any compact Vaisman manifolds.
 \end{abstract}
\maketitle

\section{Introduction}

In \cite{futaki83.1}, the first author introduced an integral invariant defined on Fano manifolds and showed that it obstructs the existence of K\"ahler-Einstein
metrics. More precisely, if $M$ is a Fano manifold of dimension $n$ and 
$$\omega = i\, g_{i\barj}\, dz^i \wedge d\barz^j$$ 
is a K\"ahler form representing $2\pi c_1(M)$, there exists a real smooth function 
$F \in C^\infty(M)$ such that the Ricci form 
$$\rho(\omega) = - i \partial \barpartial \log \det g$$ is written as 
$$\rho(\omega) - \omega = i \partial \barpartial F$$
since both $\rho(\omega)$ and $\omega$ represent $2\pi c_1(M)$.
Then the invariant is defined as a character $f$ of the Lie algebra $\mathfrak h(M)$ of all holomorphic vector fields on $M$ into $\mathbb C$ 
and is 
expressed for $X \in \mathfrak h(M)$ by
\begin{equation}\label{invariant1} f(X) = \int_M\, XF\, \omega^n.
\end{equation}
This invariant was later extended in various ways. We first briefly review the various extension. 

The first line of extension is as invariants for compact K\"ahler manifolds with fixed K\"ahler class. Let $(M, [\omega])$ be a compact
K\"ahler manifold $M$ with a K\"ahler class $[\omega]$. Then we can extend $f$ as an obstruction to the existence of a K\"ahler form in $[\omega]$
of constant scalar curvature (\cite{futaki83.2}, \cite{calabi85}). This is defined by the same formula (\ref{invariant1}) if we replace the condition of $F$ 
by
$$ \sigma - \int_M \sigma \omega^n/\vol(M) = \Delta F$$
where $\sigma$ denotes the scalar curvature of $\omega$. When $[\omega]$ is an integral class this was further reformulated by Donaldson 
\cite{donaldson02} as an invariant for polarized schemes, and was used to
define the notion of K-stability. In a guise the reformulated invariant was expressed as slopes for subschemes by Ross and Thomas \cite{rossthomas06}. 
For Fano manifolds with the anticanonical class, 
the invariant $f$ has recently been extended to an obstruction to the existence of K\"ahler-Einstein metrics with cone singularities 
along a divisor (Donaldson \cite{donaldson1102}, 
Li \cite{ChiLi1104}), and it is used to define logarithmic K-stability. 
Around the same time as the work \cite{futaki83.2} and \cite{calabi85}, the invariant $f$ obstructing the constant scalar curvature K\"ahler metric was extended further by Bando \cite{bando83} 
to a family of invariants $f_k$, $k=1, \cdots, n$, where $f_k$ obstructs the existence of a K\"ahler form in $[\omega]$ 
such that the $k$-th Chern form $c_k(\omega)$ is harmonic. Notice that the scalar curvature is constant if and only if the first Chern form is 
harmonic by the second Bianchi identity. Thus $f_1$ coincides with $f$. Bando's idea can be further extended to transverse K\"ahler geometry of 
compact Sasaki manifolds \cite{FOW}.

The second line of extension was obtained in \cite{futakimorita85}, but this extension is obtained by relating the invariant $f$ for Fano manifolds
to invariants classically known in the theory of the equivariant cohomology. Again, let $M$ be a Fano manifold and 
$\omega = i\, g_{i\barj}\, dz^i \wedge d\barz^j$
is a K\"ahler form representing $2\pi c_1(M)$. 
By the solution by Yau  \cite{yau77} to the Calabi conjecture, there exists a K\"ahler form $\eta$ representing $2\pi c_1(M)$ such that $\rho(\eta) = \omega$. 
Then we can rewrite $f$ as in (\ref{invariant1}) in terms of $\eta$ and obtain
\beq\label{invariant2}
f(X) = \int_M \div X \, \rho(\eta)^n
\eeq
where
\beq\label{invariant3}
\div X\cdot\eta^n = \p i(X) \eta^n
\eeq
and $i(X)$ denotes the interior product by $X$, see \cite{futakimorita85} or \cite{futaki88}  for the detail. Note that, instead of the K\"ahler form $\eta$, we may 
use any volume form $\Omega$ and its Ricci form 
$$\rho(\Omega) = - i \p\bp \log \O.$$ 
Then we may
write (\ref{invariant2}) as
\beq\label{invariant4}
f(X) = \int_M \div X \, \rho(\O)^n
\eeq
where
\beq\label{invariant5}
\div X\cdot\O = \p i(X) \O.
\eeq
We can prove that $f$ is then independent of the choice of $\Omega$, and thus we do not need to assume
that $M$ is Fano or K\"ahler. Thus we obtained an invariant for (possibly non-K\"ahler) complex manifolds. This last invariant is the one we wish
to study in this paper. Note also that we can rewrite (\ref{invariant4}) as
\beq\label{invariant6}
f(X) = - \int_M X \left(\frac{\rho(\Omega)^n}{\Omega}\right)\, \Omega.
\eeq
Therefore the invariant $f$ is an obstruction to the existence of a volume form $\O$ such that $\rho(\O)^n/\O$ is constant.

We remark in passing that there is a larger family of invariants including these two lines of extension (\cite{futaki04-1}). Among them we have a family
of invariants which obstructs asymptotic Chow semistability of polarized manifolds (\cite{futaki04-1}, \cite{FOS08}). By computing them for a 7-dimensional toric Fano manifold
suggested by Nill and Paffenholz \cite{NillPaffen}, Ono, Sano and Yotsutani \cite{OSY09} showed that there is a K\"ahler-Einstein Fano manifold which is asymptotically unstable.

Now let us turn to the study of the invariant defined by (\ref{invariant4}) or (\ref{invariant6}). 
Let $M$ be a compact connected complex manifold of dimension $n$. Consider a covering space $\pi : \wt M \to M$ with the group
$\Gamma$ of the deck transformations, and let $\chi : \Gamma \to \mathbb R^+$ be a homomorphism.
A volume form $\Omega$ on $\wt M$ is said to be automorphic with respect to $\chi$ if, for any $\gamma \in \Gamma$,  
\begin{equation}\label{Intro1}
\gamma^\ast \Omega = \chi(\gamma)\Omega.
\end{equation}
Such a covering with automorphic volume form naturally occurs for locally conformally K\"ahler manifolds as we shall see in the next section.

Given such a covering $\wt M$ with automorphic volume form with respect to $\chi$ we have a Ricci form $\rho_\O$ of $\O$ defined on $\wt M$ by
\beq\label{Intro2}
\rho_\O = - i \p\bp \log \O.
\eeq
Since $\O$ is automorphic, $\rho_\O$ is invariant under the action of $\Gamma$, and thus descends to a $2$-form on $M$ which 
is denoted by the same notation $\rho_\O$. This represents the first Chern class $2\pi c_1(M)$, and also $2\pi c_1(\wt M)$ upstairs if $\wt M$ is compact.

Denote by $\mathfrak h(M)$ and $\mathfrak h(\wt M)$ the Lie algebras of all holomorphic vector fields on $M$ and $\wt M$ respectively.
Denote also by $\mathfrak h_\Gamma (\wt M)$ the Lie subalgebra of $\mathfrak h(\wt M)$ consisting of all holomorphic vector fields on $\wt M$ which
are invariant under the action of $\Gamma$. Then a holomorphic vector field in $\mathfrak h_\Gamma(\wt M)$ descends to a
holomorphic vector field on $M$, and thus $\mathfrak h_\Gamma(\wt M)$ can be naturally regarded as a Lie subalgebra of $\mathfrak h(M)$.
For an $X$ in $\mathfrak h_\Gamma(\wt M)$, its divergence $\div X$ is defined by 
\beq\label{Intro3}
\div X\cdot\O = \p i(X) \O
\eeq
where $i(X)$ denotes the interior product by $X$. Since $\O$ is automorphic and $X$ is invariant under $\Gamma$ it follows that
$\div X$ is invariant under $\Gamma$ and that $\div X$ descends to a smooth function on $M$.

We define a linear function $f : \mathfrak h_\Gamma(\wt M) \to \bfC$ by
\beq\label{Intro4}
f(X) = \int_M \div X \, \rho_\O^n.
\eeq

The main theorem of this paper is the following.
\begin{theorem}\label{Intro5}\ \ $\mathrm{(a)}$\ \ 
Let $M$ be a compact connected complex manifold and $\wt M$ its covering space with the group $\Gamma$ of deck transformations.
Suppose that we are given a character $\chi : \Gamma \to \mathbb R^+$. Then $f$ is independent of the choice of the volume form 
automorphic with respect to $\chi$.\\
$\mathrm{(b)}$\ \ The invariants defined by (\ref{invariant4}) and (\ref{Intro4}) coincide.\\
\end{theorem}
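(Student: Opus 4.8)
The plan is to derive both statements from a single differentiation-in-a-parameter computation, in the spirit of the Futaki--Morita proof that the classical invariant (\ref{invariant4}) is well defined. For (a), let $\O_0$ and $\O_1$ be two volume forms on $\wt M$, both automorphic with respect to the same $\chi$. Their quotient is positive, and writing $e^{\phi}=\O_1/\O_0$ the exponent $\phi$ is $\Gamma$-invariant (the factors $\chi(\gamma)$ cancel), hence descends to $M$. I would join the two forms by $\O_t=e^{t\phi}\O_0$, $t\in[0,1]$, each again $\chi$-automorphic, and set $f_t(X)=\int_M \div_t X\,\rho_{\O_t}^{\,n}$, the aim being $\tfrac{d}{dt}f_t(X)=0$. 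Two local identities drive everything: from $i(X)\O_t=e^{t\phi}i(X)\O_0$ together with $\p\phi\wedge i(X)\O_0=(X\phi)\,\O_0$ one finds $\div_t X=\div_0 X+t\,X\phi$, whence $\tfrac{d}{dt}\div_t X=X\phi$; and from $\log\O_t=t\phi+\log\O_0$ one gets $\rho_{\O_t}=\rho_{\O_0}-i\,t\,\p\bp\phi$, whence $\tfrac{d}{dt}\rho_{\O_t}=-i\,\p\bp\phi$. Therefore
\[
\frac{d}{dt}f_t(X)=\int_M (X\phi)\,\rho_{\O_t}^{\,n}\;+\;n\int_M \div_t X\,\rho_{\O_t}^{\,n-1}\wedge(-i\,\p\bp\phi).
\]

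The crux is to show the second integral cancels the first. For this I would use the pointwise identity, valid for every holomorphic $X$ and every volume form and checked in local holomorphic coordinates from $\p_{\bar k}X^j=0$,
\[
\bp(\div X)=i\,i(X)\rho_{\O}.
\]
Writing $-i\,\p\bp\phi=\bp(i\,\p\phi)$ and using that $\rho_{\O_t}^{\,n-1}$ is $\bp$-closed turns the second integrand into $n\,\div_t X\,\bp\!\big(\rho_{\O_t}^{\,n-1}\wedge i\,\p\phi\big)$; a Stokes integration by parts on the compact manifold $M$ transfers $\bp$ onto $\div_t X$, and the displayed identity converts the result into $n\int_M i(X)\rho_{\O_t}\wedge\rho_{\O_t}^{\,n-1}\wedge\p\phi=\int_M i(X)\big(\rho_{\O_t}^{\,n}\big)\wedge\p\phi$. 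Finally, applying the interior-product Leibniz rule to the vanishing top-degree form $i(X)\big(\rho_{\O_t}^{\,n}\wedge\p\phi\big)=0$ gives $i(X)\big(\rho_{\O_t}^{\,n}\big)\wedge\p\phi=-(X\phi)\,\rho_{\O_t}^{\,n}$, producing exactly the cancelling term. Crucially, every quantity under the integral is $\Gamma$-invariant and so lives on $M$, which is what makes the integration by parts legitimate downstairs.

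For (b) I would run the identical computation, but now along a path that changes the character rather than fixing it. Fix a volume form $\O_M$ on $M$, set $\O_0=\pi^\ast\O_M$ (automorphic for the trivial character), let $\O_1$ be any $\chi$-automorphic form, and put $\phi=\log(\O_1/\O_0)$ and $\O_s=e^{s\phi}\O_0$, which is $\chi^{\,s}$-automorphic. Here $\phi$ fails to be $\Gamma$-invariant; instead $\gamma^\ast\phi-\phi=\log\chi(\gamma)$ is constant. The key point is that the computation above involves $\phi$ only through $X\phi$, $\p\phi$ and $\p\bp\phi$, and each of these annihilates the additive constant $\log\chi(\gamma)$, so all three still descend to $M$; the argument goes through verbatim and yields $\tfrac{d}{ds}f_s(X)=0$, hence $f_1(X)=f_0(X)$. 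Since $\O_0=\pi^\ast\O_M$ descends, $f_0(X)$ is precisely (\ref{invariant4}) evaluated with $\O_M$, while $f_1(X)$ is (\ref{Intro4}); combined with the classical independence of (\ref{invariant4}) on $\O_M$ and with part (a), this gives the asserted coincidence and simultaneously exhibits (a) as the case of a constant-free $\phi$. I expect the main obstacle to be exactly the cancellation in the second paragraph---pinning down the constant and sign in $\bp(\div X)=i\,i(X)\rho_\O$ and carrying out the two integrations by parts so that the pieces match---and, for (b), the bookkeeping that confirms descent to $M$ even though $\phi$ itself is only $\chi$-automorphic.
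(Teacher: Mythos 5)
Your proposal is correct and follows essentially the same route as the paper: the paper proves one general statement (its Theorem 3.1, allowing the character to vary as well) via the interpolation $\Omega_t=\varphi^t\Omega_0$, the same derivative formulas for $\div_t X$ and $\rho_{\Omega_t}$, the identity $\bp(\div_t X)=i(X)\p\bp\log(\varphi^t a)$, Stokes' theorem (using that $\p\log\varphi$ is $\Gamma$-invariant and descends to $M$), and the dimension-count Leibniz identity $i(X)\left(\rho_{\Omega_t}^n\wedge\p\log\varphi\right)=0$. Your only deviation is organizational: you run the computation twice (fixed character for (a), then interpolating from the trivial character to $\chi$ for (b)), whereas the paper runs it once with two arbitrary characters $\chi_0,\chi_1$ and obtains (a) and (b) as the special cases $(\Omega_1,\chi)$ versus $(\Omega_2,\chi)$ and $(\Omega_1,\chi)$ versus $(\Omega_2,1)$.
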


A locally conformally K\"ahler manifold $(M,J,g)$ is said to be a Vaisman manifold if there is a metric in the conformal class of $g$ for which
the Lee form is parallel, see section 2 for more detail.

\begin{theorem}\label{Intro6}\ \ 
The invariant in the previous theorem vanishes for any compact Vaisman manifold.
\end{theorem}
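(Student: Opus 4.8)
The strategy is to use the freedom granted by part (a) of Theorem~\ref{Intro5}: since $f$ does not depend on the choice of automorphic volume form, I am free to compute it with the one coming from the Kähler cone structure of the cover. By the structure theory of compact Vaisman manifolds, the minimal covering $\pi:\wt M\to M$ on which the Lee form $\t$ becomes exact carries a Kähler metric $g_K$ (obtained from the lifted metric by the conformal factor determined by $\t$), and the parallelism $\n\t=0$ together with $|\t|=1$ forces $(\wt M,g_K)$ to be a genuine Kähler cone $C(S)=S\times\R^+$ over a compact Sasaki manifold $S$ of real dimension $2n-1$. Writing $\o_K$ for the cone Kähler form and $r\6_r$ for the radial (Euler) field, one has $\Ll_{r\6_r}\o_K=2\o_K$, while the deck group $\Gamma\cong\ZM$ acts by $(s,r)\mapsto(\psi(s),\l r)$ with $\psi$ a Sasaki automorphism and $\l>1$. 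I take $\Om=\o_K^n/n!$; then $\gamma^*\Om=\l^{2n}\Om$, so $\Om$ is automorphic and by Theorem~\ref{Intro5}(a) it computes the same $f$.

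With this choice $\rho_\Om=-i\p\bp\log\Om$ is exactly the Ricci form of the Kähler metric $g_K$, i.e. $\rho_\Om(\,\.\,,\,\.\,)=\Ric_{g_K}(J\,\.\,,\,\.\,)$. The key point is that this Ricci form is degenerate. I would first recall the elementary fact that a Riemannian cone is Ricci-flat in the radial direction, $\Ric_{g_K}(r\6_r,\,\.\,)=0$, whence $\Ric_{g_K}(\xi,\,\.\,)=0$ as well, where $\xi=J(r\6_r)$ is the Reeb field of $S$; indeed $\Ric_{g_K}(\xi,Y)=-\Ric_{g_K}(r\6_r,JY)=0$ by the $J$-invariance of the Kähler Ricci tensor. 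Consequently $i(r\6_r)\rho_\Om=0$ and $i(\xi)\rho_\Om=0$, so $\rho_\Om$ annihilates the real $2$-plane spanned by $r\6_r$ and $\xi$ at every point and thus has rank at most $2n-2$.

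A skew form of rank at most $2n-2$ on a $2n$-manifold has vanishing top exterior power: since $i(r\6_r)$ is an antiderivation, $i(r\6_r)\rho_\Om^n=n\,(i(r\6_r)\rho_\Om)\wedge\rho_\Om^{n-1}=0$, and a top-degree form killed by the nonzero field $r\6_r$ must vanish. Hence $\rho_\Om^n\equiv0$ on $\wt M$, and since $\rho_\Om^n$ descends to $M$ we obtain $\rho_\Om^n\equiv0$ on $M$. Therefore
\beq
f(X)=\int_M \div X\,\rho_\Om^n=0
\eeq
for every $X\in\mathfrak h_\Gamma(\wt M)$, which is the assertion.

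The main difficulty is concentrated in the first step: one must invoke (and, for a self-contained account, reprove) the structure theorem identifying the Kähler cover with an honest metric cone, and check that the cone volume form is the correct automorphic representative so that Theorem~\ref{Intro5}(a) legitimately applies. A secondary point is that the structure theorem may naturally produce a cone only after passing to a finite cover; because $f$ behaves well under finite coverings — or, alternatively, because the minimal Kähler cover is already conical by $\n\t=0$, $|\t|=1$ — this does not affect the conclusion. Once the conical identification and the equality $\rho_\Om=\Ric_{g_K}(J\,\.\,,\,\.\,)$ are in hand, the remaining rank computation is purely algebraic and the vanishing is immediate.
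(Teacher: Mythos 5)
Your proof is correct and follows essentially the same route as the paper: invoke the structure theorem identifying a compact Vaisman manifold with a quotient of the K\"ahler cone $C(S)$ over a Sasaki manifold, take the cone volume form as the automorphic representative (legitimate by Theorem \ref{Intro5}(a)), and conclude $f=0$ because the degeneracy of the cone's Ricci form forces $\rho_\Omega^n \equiv 0$. If anything, your appeal to the full radial Ricci-flatness $\mathrm{Ric}(r\partial_r,\cdot)=0$ is slightly sharper than the paper's Lemma \ref{Ricci}, which asserts vanishing only on the plane spanned by $\xi$ and $J\xi$, and it is precisely this full statement that your rank argument $i(r\partial_r)\rho_\Omega=0 \Rightarrow \rho_\Omega^n=0$ actually requires.
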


This paper is organized as follows. In section 2 we summarize the basics of locally conformally K\"ahler geometry, and
give a proof of Theorem \ref{Intro6}. In section 3 we give a proof of Theorem \ref{Intro5}. In section 4 we compute the 
invariant for the one
point blow-up of the Hopf surface, and see that this surface gives an example of nontrivial invariant.

\section{Locally conformally K\"ahler manifolds}

Let $(M,J)$ be a connected complex manifold of complex dimension $n \ge 2$ with $J$ a complex structure.
A locally conformally K\"ahler structure (LCK structure for short) on $(M,J)$ is a covering 
$$ \Gamma \to (\widetilde M,\wt J,\wt \o) \to (M,J)$$
where $\widetilde M$ is a covering space of $M$, $\wt \o$ a K\"ahler form on $\widetilde M$, and $\Gamma$ the group of
deck transformations acting on $\widetilde M$ as holomorphic homotheties. Thus there is a homomorphism $\chi : \Gamma \to \mathbb R^+$
satisfying
$$  \gamma^\ast \wt \o = \chi(\gamma) \wt \o.$$
A $p$-form $\alpha$ on $\widetilde M$ ia said to be automorphic if $\gamma^\ast \alpha = \lambda(\gamma) \alpha$ for some character $\lambda : \Gamma \to \mathbb R^+$. The above K\"ahler form $\wt \o$ is an example of an automorphic $2$-form.

There is an equivalent definition of an LCK structure described as follows. An LCK structure is a collection of an open covering $M = \cup_{\alpha \in \Lambda} U_\alpha$ and
K\"ahler metrics $g_\alpha$ on $U_\alpha$ satisfying
$$ g_\alpha = c_{\alpha\beta} g_\beta$$
on $U_\alpha \cap U_\beta$ with $c_{\alpha\beta} \in \mathbb R_+$. Then $\{c_{\a\b}\}$ gives a cocycle. Let $\theta$ be a representative as a closed one form
defining the same cohomology class as $\{\log c_{\a\b}\}$. Thus we have $d\theta = 0$, and locally $\theta|_{U_\a} = df_\a$ for a smooth function $f_\a$ on $U_\a$ 
with $f_\b - f_\a = \log c_{\a\b}$ 
and $e^{f_\a}g_\a = e^{f_\b} g_\b$ on
$U_\a \cap U_\b$. Therefore $g := e^{f_\a}g_\a$ defines a global Hermitian metric locally conformal to a K\"ahler metric. The $1$-form $\theta$ is called the Lee form. 
Let  $\omega$ be the fundamental $2$-form defined by
 $$ \omega(X,Y) = g(JX,Y).$$
 Then one easily shows that
 \begin{eqnarray}
 d\omega &=& \theta \wedge\omega \label{lck1}\\
 d\theta &=& 0. \label{lck2}
 \end{eqnarray}
As an equivalent third definition we may
say that an Hermitian manifold $(M,J,g)$ is a locally conformally K\"ahler manifold if the fundamental $2$-form $\omega$ of $g$ 
satisfies (\ref{lck1}) and (\ref{lck2}). 

\begin{remark} When we say $(M,J,g)$ is an LCK manifold we assume that $\theta \ne 0$, that is, $(M,J,g)$ is not globally K\"ahler. 
\end{remark}

The equivalence between the second and the third definitions is obvious. 
To see that first implies the third, suppose that we are in a situation of the first definition. Let $L$ be the $\mathbb R$-bundle
given by $\wt M \times_\chi \mathbb R$. Since $\chi$ is $\mathbb R^+$-valued, $L$ is oriented and thus is a trivial bundle. 
It follows that $L$ has a nowhere zero section which defines a positive $\chi$-equivariant function $\phi$ on $\wt M$.
Then $\omega := \phi^{-1}\wt\omega$ is a $\Gamma$-invariant positive $2$-form. This $\omega$ satisfies the third definition with
$\theta = - \log \phi$.

We need only to show that the second implies the first. Suppose that we have K\"ahler forms $\omega_\a$ on $U_\a$ such that
$\omega_\a = c_{\a\b} \omega_\b$ with $c_{\a\b} \in \mathbb R^+$. Then $\{c_{\a\b}\} \in H^1(M, \mathbb R^{+\delta})$ defines a 
flat principal $\mathbb R^+$-bundle. The holonomy gives a character $\chi : \Gamma = \pi_1(M) \to \mathbb R^+$. 
Let $L = \wt M \times_\chi \mathbb R$ be the associated $\mathbb R$-bundle. 
We may regard $\{\omega_\a\}$ as a section of 
$$
L\otimes \Lambda^2T^{\ast} M = (\wt M \times_\chi \mathbb R)\otimes \Lambda^2T^\ast M = \wt M \times_\chi (\mathbb R \otimes p^\ast\Lambda^2T^{\ast} M)
$$
where $p : \wt M \to M$ is the projection.
Thus $\{\omega_\a\}$ defines a $\chi$-equivariant closed $2$-form $\wt \o$ on $\wt M$. This completes the equivalence of the three definitions of LCK structures.
 
 \begin{proof}[Proof of Theorem 2.1] Recall that 
a locally conformally K\"ahler manifold $(M,J,g)$ is said to be a Vaisman manifold if there is a metric in the conformal class of $g$ for which
the Lee form is parallel. It is shown in \cite{KamishimaOrnea05} that a Vaisman manifold is obtained as 
a quotient of the K\"ahler cone $C(S)$ of a Sasakian manifold $S$ 
by a subgroup $\Gamma$ of the homotheties acting freely and properly discontinuously. Then the proof follows from Lemma \ref{Ricci} below
since for the Reeb vector field $\xi = Jr\frac\p{\p r}$ on $C(S)$, $\xi - iJ\xi$ is a holomorphic flow and the Ricci tensor degenerates on this orbit.
See the arguments below for the notations.
\end{proof}

Recall that a Riemannian manifold $(S, g)$ of dimension $2m + 1$ is a Sasakian manifold if the cone $(C(S), \bar g) = (\mathbb R^+ \times S, 
dr^2 + r^2 g)$ is a K\"ahler manifold. Here $r$ is the standard coordinate on $\mathbb R^+$. The metric $\bar g$ is a warped product metric,
and the Riemannian geometry of $C(S)$ is easily studied from that of $S$.
Let $\bar\nabla$ and $\nabla$ be the Levi-Civita connections on $C(S)$ and $S$ respectively.
Let  $X, Y$ be tangent vector fields on $S$, which are naturally regarded as vector fields on $C(S)$ by the product 
structure $C(S) = \mathbb R^+ \times S$. Consider a vector field  $\Psi := r\frac \p{\p r}$ on $C(S)$. 
It is generally true for cone manifolds that 
\beq\label{Ricci1}
\bar\nabla_X \Psi = \bar\nabla_{\Psi} X  = X   
\eeq
and 
that 
\beq\label{Ricci2}
\bar\nabla_X Y = \nabla_X Y - g(X,Y) \Psi.
\eeq
Let $\bar R$ be the curvature tensors on $C(S)$.
Then using (\ref{Ricci1}) and (\ref{Ricci2}) we obtain
\begin{eqnarray*}
\bar R(X,\Psi, Y, \Psi) &=& \bar g (- \bar\nabla_X \bar\nabla_{\Psi} Y + \bar\nabla_{\Psi}\bar\nabla_X Y + \bar\nabla_{[X, \Psi]}Y, \Psi\\
&=& \bar g (- \bar\nabla_X Y + \bar\nabla_{\Psi}(\nabla_X Y - g(X,Y)\Psi), \Psi)\\
&=& - \bar g(\nabla_X Y - g(X,Y)\Psi, \Psi) + \bar g(\nabla_X Y - g(X,Y)\Psi, \Psi)\\
&=& 0.
\end{eqnarray*}
This implies that
\beq\label{Ricci3'}
\bar Ric(\Psi, \Psi) = 0
\eeq
where $\bar Ric$ denotes the Ricci tensor on $C(S)$. 
Let $J$ be the complex structure on $C(S)$. The vector field $\xi = Jr\frac\p{\p r}$ on $C(S)$ is called the Reeb vector field,
and it is a standard fact in Sasakian geometry that $\xi - iJ\xi$ is a holomorphic vector field.
Since the Ricci tensor on a K\"ahler manifold is $J$-invariant, (\ref{Ricci3'}) implies 
\beq\label{Ricci4'}
\bar Ric(\xi, \xi) = 0.
\eeq
From (\ref{Ricci3'}) and (\ref{Ricci4'}) we have proved the following.
\begin{lemma}\label{Ricci} The Ricci tensor on $C(S)$ vanishes on the plane spanned by $\xi$ and $J\xi = - r \frac \p{\p r}$.
\end{lemma}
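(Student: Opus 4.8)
The plan is to read the lemma as the assertion that the restriction of the symmetric $2$-tensor $\bar Ric$ to the real $2$-plane $P = \mathrm{span}\{\xi, J\xi\}$ vanishes identically. Since $\xi = J\Psi$ and $J\xi = J^2\Psi = -\Psi = -r\frac{\p}{\p r}$, this plane is nothing but the $J$-invariant plane $\mathrm{span}\{\Psi, J\Psi\}$, the complex line through $\Psi$ viewed as a real $2$-plane. Because $\bar Ric$ is symmetric, its restriction to $P$ is determined by the three numbers $\bar Ric(\Psi,\Psi)$, $\bar Ric(J\Psi, J\Psi)$ and $\bar Ric(\Psi, J\Psi)$, so it suffices to check that each of these vanishes.

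The two diagonal entries are already in hand. First, $\bar Ric(\Psi,\Psi) = 0$ is (\ref{Ricci3'}), which was established above from the cone curvature identities (\ref{Ricci1}) and (\ref{Ricci2}). Second, $\bar Ric(J\Psi, J\Psi) = \bar Ric(\xi,\xi) = 0$ is (\ref{Ricci4'}), which followed from (\ref{Ricci3'}) together with the $J$-invariance of the Ricci tensor of a K\"ahler metric. For the remaining off-diagonal entry I would again invoke $J$-invariance: $\bar Ric(\Psi, J\Psi) = \bar Ric(J\Psi, J^2\Psi) = -\bar Ric(J\Psi, \Psi)$, and by the symmetry of $\bar Ric$ the right-hand side equals $-\bar Ric(\Psi, J\Psi)$, whence $\bar Ric(\Psi, J\Psi) = 0$.

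More conceptually, this computation shows that on any $J$-invariant $2$-plane a $J$-invariant symmetric tensor automatically has vanishing off-diagonal term and equal diagonal terms, so it is completely determined by its value on a single vector; here that value $\bar Ric(\Psi,\Psi)$ is zero by (\ref{Ricci3'}), which already forces the entire restriction to $P$ to vanish. I therefore do not expect any genuine obstacle: the one substantive input, $\bar Ric(\Psi,\Psi) = 0$, is exactly the content of the curvature computation preceding the lemma, and the only point needing a little care is that ``vanishing on the plane'' also covers the cross-term $\bar Ric(\xi, J\xi)$, which is dispatched by the $J$-invariance and symmetry of $\bar Ric$ as above.
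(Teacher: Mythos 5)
Your proof is correct and follows essentially the same route as the paper: the diagonal entries are the paper's (\ref{Ricci3'}) and (\ref{Ricci4'}) (the latter obtained exactly as you do, from $J$-invariance of the Ricci tensor of a K\"ahler metric), and the lemma is read off from these. The only thing you add is the explicit verification that the cross term $\bar{R}ic(\Psi, J\Psi)$ vanishes by $J$-invariance plus symmetry, a point the paper leaves implicit when it concludes the lemma from the two diagonal identities.
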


For a Vaisman manifold $M$ we can also find an LCK metric $g$ for which the Ricci form $\rho(g)$ satisfies $\rho(g)^n = 0$,
showing that the integrand of (\ref{invariant4}) and (\ref{invariant6}) vanishes. Recall that the K\"ahler form $\wt \o$ on $C(S)$ is written as
$\wt \o = dd^c r^2$, see for example \cite{FOW}. As we have seen in Lemma \ref{Ricci}, we have $\rho(\wt \o)^{m+1} = 0$. Note that $n= m+1$ here. 
Then $\wt \o/r^2 = \frac 1 {r^2} dd^c r^2$ defines
a $\Gamma$-invariant 2-form and descends to $M$.  Since $dd^c \log r$ is the transverse K\"ahler form $\omega^T$  on the Sasaki manifold $S$
(but regarded as lifted to $C(S)$),
the Ricci form of $\wt \o/r^2$ is equal to $\rho(\wt \o) + 2(m+1)\omega^T$. This also degenerates on the orbit of the flow generated by $\xi - iJ\xi$. Hence
we have $\rho(\wt \o/r^2)^{m+1} = 0$ on the Vaisman manifold $M$. 

 \section{Proof of Theorem \ref{Intro5}}
In this section we prove the following result.

\begin{theorem}\label{result1}
Let $M$ be a compact connected complex manifold and $\wt M$ its covering space with the group $\Gamma$ of deck transformations.
Suppose that we are given a character $\chi : \Gamma \to \mathbb R^+$. Then $f$ defined by (\ref{Intro4}) 
is independent of the choice of the automorphic volume form $\Omega$ and its character $\chi$.
\end{theorem}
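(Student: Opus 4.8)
The plan is to show that $f(X) = \int_M \div X\, \rho_\O^n$ does not change when we replace one automorphic volume form by another, and that this formula agrees with the definition \eqref{invariant4} in the single-manifold case. The key observation is that the space of volume forms automorphic with respect to a \emph{fixed} character $\chi$ is an affine space: if $\O_0$ and $\O_1$ are both automorphic with respect to $\chi$, then their ratio $\O_1/\O_0$ is a $\Gamma$-invariant positive function on $\wt M$, hence descends to a positive function on the compact manifold $M$, which we may write as $e^{\psi}$ for a globally defined $\psi \in C^\infty(M)$. Thus I would connect $\O_0$ to $\O_1$ by the path $\O_t = e^{t\psi}\O_0$, each term automorphic with respect to the same $\chi$, and prove invariance by differentiating $f$ along $t$ and showing $\frac{d}{dt} f_{\O_t}(X) = 0$.

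First I would compute the variation of the two ingredients. Since $\rho_{\O_t} = -i\p\bp\log\O_t = \rho_{\O_0} - i\p\bp(t\psi)$, we get $\frac{d}{dt}\rho_{\O_t} = -i\p\bp\psi$, a $\Gamma$-invariant exact-type form descending to $M$. For the divergence, from \eqref{Intro3} one has $\div_{\O_t} X \cdot \O_t = \p\, i(X)\O_t$; writing $\O_t = e^{t\psi}\O_0$ and using the holomorphy of $X$ one finds $\div_{\O_t} X = \div_{\O_0} X + t\, X\psi$, so $\frac{d}{dt}\div_{\O_t} X = X\psi$. Substituting into $f_{\O_t}(X) = \int_M \div_{\O_t} X\,\rho_{\O_t}^n$ and using the Leibniz rule gives
\begin{equation}\label{varsplit}
\frac{d}{dt} f_{\O_t}(X) = \int_M (X\psi)\,\rho_{\O_t}^n \;+\; \int_M \div_{\O_t} X \cdot n\,\rho_{\O_t}^{n-1}\wedge(-i\p\bp\psi).
\end{equation}
Everything here descends to the compact manifold $M$, so the integrals make sense. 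The plan is to show the right-hand side of \eqref{varsplit} vanishes by an integration-by-parts argument exactly parallel to the classical computation in \cite{futakimorita85}, \cite{futaki88}: one rewrites $X\psi$ and the term $-i\p\bp\psi$ using the identity $\div X \cdot \rho^n = (\text{exact})$ coming from $i(X)\rho = \tfrac{1}{i}\bp(\div X \cdot \text{(something)})$ together with $\L_X \rho_\O = d\,i(X)\rho_\O$ and Stokes' theorem on the closed manifold $M$.

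The main obstacle I expect is the second term of \eqref{varsplit}: handling $\int_M \div X \cdot \rho^{n-1}\wedge\p\bp\psi$ requires moving the $\p\bp$ off of $\psi$ and onto $\div X$ and $\rho^{n-1}$, which uses both that $\rho$ is $\p\bp$-closed (indeed $\bp\rho = 0$ since $\rho = -i\p\bp\log\O$) and the precise relation between $\div X$ and the holomorphic field $X$ — specifically that $\bp(\div X)$ is controlled by $i(X)\rho_\O$. This is the analogue of the key lemma in the Fano case, and here I must be careful that all manipulations are performed with forms and functions that are genuinely $\Gamma$-invariant and hence descend to $M$, so that Stokes' theorem may be applied downstairs without boundary contributions. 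Once invariance under change of $\O$ (for fixed $\chi$) is established, part (b) and the independence of $\chi$ follow by specializing to the trivial covering $\wt M = M$, $\Gamma = \{1\}$, where \eqref{Intro4} reduces verbatim to \eqref{invariant4}, together with the remark that changing $\chi$ amounts to rescaling $\O$ by an automorphic factor already covered by the affine-space argument.
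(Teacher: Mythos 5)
Your argument is sound for the case it actually treats, but it treats only half of the theorem. The statement asserts independence of both the volume form \emph{and its character} $\chi$, and this second clause is the whole point: Theorem \ref{Intro5}(b) is obtained precisely by comparing a form $\O_1$ automorphic with respect to a nontrivial $\chi$ against a pullback $\O_2 = \pi^\ast\O_M$ of a volume form on $M$, which is automorphic with respect to the \emph{trivial} character. Your affine-space argument is confined to a fixed $\chi$: you use essentially that $\varphi = \O_1/\O_0$ is $\Gamma$-invariant, hence descends to $M$ as $e^{\psi}$ with $\psi \in C^\infty(M)$, and everything afterwards (the path $e^{t\psi}\O_0$, the integration by parts downstairs) depends on $\psi$ living on the compact manifold $M$. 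When $\chi_0 \ne \chi_1$ this fails at the first step: the ratio satisfies $\gamma^\ast\varphi = (\chi_1(\gamma)/\chi_0(\gamma))\,\varphi$, so it is not $\Gamma$-invariant and does not descend. Your closing remark that ``changing $\chi$ amounts to rescaling $\O$ by an automorphic factor already covered by the affine-space argument'' is exactly backwards --- that rescaling factor is the one your argument cannot handle --- and specializing to $\wt M = M$, $\Gamma = \{1\}$ says nothing about coverings with nontrivial $\chi$.

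The gap is fixable, and the repair is the actual content of the paper's proof. One interpolates $\O_t = \varphi^t\O_0$, which is automorphic with respect to the character $\chi_t = \chi_0^{1-t}\chi_1^t$. The function $\log\varphi$ does not descend to $M$, but since each $\gamma^\ast\varphi$ is a \emph{constant} multiple of $\varphi$, the $1$-form $\p\log\varphi$ and the function $X\log\varphi$ (for $X \in \mathfrak h_\Gamma(\wt M)$) are $\Gamma$-invariant and do descend. The derivative $\frac{d}{dt}f_t(X)$ involves only these descended objects together with $\div_t X$ and $\rho_{\O_t}$, so Stokes' theorem on $M$ still applies; after integrating by parts and using $\bp\,\div_t X = i(X)\p\bp\log(\varphi^t a)$, the surviving terms assemble into $\int_M i(X)\bigl(\rho_{\O_t}^n\wedge\p\log\varphi\bigr)$, which vanishes because $\rho_{\O_t}^n\wedge\p\log\varphi$ is an $(n+1,n)$-form on an $n$-dimensional complex manifold. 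With this modification your computation goes through essentially verbatim and yields the full statement, including part (b) of Theorem \ref{Intro5}.
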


Theorem \ref{Intro5} follows from Theorem \ref{result1} because (a) in Theorem \ref{Intro5} is obtained by comparing $(\O_1, \chi)$ and
$(\O_2, \chi)$, and (b) in Theorem \ref{Intro5} is obtained by comparing $(\O_1, \chi)$ and
$(\O_2, 1)$.

 Let $M$ be a compact connected complex manifold and $\wt M$ be a covering space of $M$ with the group $\Gamma$ of deck transformations. 
 Let  $\O$ be a smooth volume form on $\wt M$ automorphic with respect to $\chi : \Gamma 
 \to \mathbb R^+$.
 If $z^1, \cdots, z^n$ are local holomorphic coordinates on $\wt M$, the volume form $\O$ can be expressed as
 \beq\label{proof1}
 \O = a\,idz^1 \wedge d\barz^1 \wedge \cdots \wedge i dz^n \wedge d\barz^n
 \eeq
where $a$ is a local positive smooth function. The Ricci form $\rho_\O$ and the divergence $\div X$ can be expressed using $a$ as
\beq\label{proof2}
\rho_\O = - i\p\bp \log a,
\eeq
and 
\beq\label{proof3}
\div X = \sum_{i=1}^n \frac{\p X^i}{\p z^i} + X \log a.
\eeq
From (\ref{proof3}) we obtain
\beq\label{proof4}
\bp \div X = i(X) \p\bp \log a.
\eeq
\begin{proof}[Proof of Theorem \ref{result1}]
Let $\O_0, \O_1$ be volume forms automorphic with respect to $\chi_0,\chi_1 : \Gamma \to \mathbb R^+$, respectively. Then $\O_i$ can be expressed as
\bea
\O_0 &=& a \,idz^1 \wedge d\barz^1 \wedge \cdots \wedge i dz^n \wedge d\barz^n,\\
\O_1 &=& \varphi a \,idz^1 \wedge d\barz^1 \wedge \cdots \wedge i dz^n \wedge d\barz^n,
\eea
where the positive real valued function $\varphi$ on $\wt M$ is given by $\O_1 = \varphi \O_0$. Then we have
\beq\label{proof6}
\gamma^*\varphi = \frac{\chi_1(\gamma)}{\chi_0(\gamma)} \varphi
\eeq
for all $\gamma\in \Gamma$.

Let $\O_t$ be
\beq\label{proof7}
\O_t = \varphi^t a \,idz^1 \wedge d\barz^1 \wedge \cdots \wedge i dz^n \wedge d\barz^n,
\eeq
for each $0 \le t \le 1$. Then each $\O_t$ is automorphic with respect to a character $\chi_t:=\chi_0^{1-t}\chi_1^t$. Thus a smooth family of linear maps $f_t:\mathfrak h_\Gamma (\wt M) \to \mathbb{C}$ is defined by
\beq\label{proof8}
f_t(X) = \int_M \div_t X \, \rho_{\O_t}^n,
\eeq
where $\div_t X$ is the divergence determined by $\O_t$. Then it suffices to show that
\beq\label{proof9}
\frac{d}{dt}f_t(X) = 0
\eeq
for all $X \in \mathfrak h_\Gamma (\wt M)$.

It is easy to see
\beq\label{proof10}
\frac d{dt} (\div_t X) = X(\log \varphi)
\eeq
and
\beq\label{proof11}
\frac d{dt} \rho_{\O_t} = - i\p\bp \log \varphi.
\eeq
Then we have
\bea
&& \frac d{dt} \int_M \div_t X \cdot \rho_{\O_t}^n \\
&=& \int_M X( \log \varphi ) \ \rho_{\O_t}^n 
- \int_M \div_t X\ i\p\bp ( \log \varphi )\ \wedge n\rho_{\O_t}^{n-1} \\
&=& \int_M  X( \log \varphi )\  \rho_{\O_t}^n 
+ \int_M \bp (\div_tX \ \wedge \p (i  \log \varphi )\ \wedge n\rho_{\O_t}^{n-1})\\
&& - \int_M \bp (\div_tX)\ \wedge \p (i  \log \varphi )\ \wedge n\rho_{\O_t}^{n-1}. 
\eea
Although $\varphi$ is not $\Gamma$-invariant, $\p \log \varphi$ is $\Gamma$-invariant from (\ref{proof6}) and descends to a $1$-form on $M$. Since $\div_tX$ and $\rho_{\O_t}$ are also defined globally on $M$, we can deduce
\beq\label{proof12}
\int_M \bp (\div_tX \ \wedge \p (i  \log \varphi )\ \wedge n\rho_{\O_t}^{n-1}) = 0
\eeq
from Stokes' Theorem. Therefore we have
\bea
&& \frac d{dt} \int_M \div_t X \cdot \rho_{\O_t}^n \\
&=& \int_M  X( \log \varphi )\  \rho_{\O_t}^n 
- \int_M \bp (\div_tX)\ \wedge \p (i  \log \varphi )\ \wedge n\rho_{\O_t}^{n-1}\\
&=& \int_M  X( \log \varphi )\  \rho_{\O_t}^n\\
&&  - \int_M (i(X)\p\bp \log (\varphi^t a))\ \wedge \p (i\log \varphi )\ \wedge n\rho_{\O_t}^{n-1} \\
&=& \int_M  X( \log \varphi )\  \rho_{\O_t}^n + \int_M  (i(X)\rho_{\O_t}^n)\ \wedge \p \log \varphi \\
&=& \int_M i(X) (\rho_{\O_t}^n \wedge \p \log \varphi  ) = 0
\eea
since $\rho_{\O_t}^n \wedge \p \log \varphi \equiv 0$ because of dimension reasons. This completes the proof of Theorem \ref{result1}.
\end{proof}

\begin{remark} In general 
the vanishing of $f$ is the obstruction to the existence of an automorphic  volume form on $\wt M$ with $\rho_\O = 0$.
But if $\wt M = M$ or $\chi$ is trivial, the vanishing of $f$ obstructs the existence of a volume form with
$\rho_\O^n = k\O$ for some constant $k$.
\end{remark}

\section{The localization formula and an example}

Now that the invariant is independent of the choice of $(\O, \chi)$ we may use an old result to compute the case when $\chi$ is
trivial. This is a residue formula for holomorphic vector fields.

Let $X$ be a holomorphic vector field in $\frak h (M)$. 
Define a section $L(X)$ of the endomorphism bundle $\mathrm{End}(TM)$ of the holomorphic tangent bundle $TM$ by
\beq\label{proof13}
L(X)Y = \nabla_XY - [X,Y].
\eeq
Suppose that the zero set $\mathrm{zero}(X)$ of $X$ consists of smooth submanifolds $\{Z_\lambda\}_{\lambda \in \Lambda}$. Then
$L(X)$ induces a section $L^\nu(X)$ of the endomorphism bundle of the normal bundle $\nu(Z_\lambda) = (TM|_{Z_\lambda})/TZ_{\lambda}$
of $Z_\lambda$. 

\begin{theorem}[Theorem 5.2.8 in \cite{futaki88}]\label{zero1}
If $L^\nu(X)$ is nonsingular at every $q \in \mathrm{zero}(X)$, we have the following localization formula
\beq
(\frac 1{2\pi})^n (n+1)f(X) =  \sum_\lambda \int_{Z_\lambda}((\div X + c_1(M))^{n+1}|_{Z_\lambda})/\det (L^\nu(X) + \frac i{2\pi}K) \nonumber
\eeq
where $K$ is the curvature form of $\nu(Z_\lambda)$ with respect to the induced Hermitian connection.
\end{theorem}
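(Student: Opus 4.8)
The plan is to identify $f(X)$ with the integral of a top power of an equivariantly closed form and then apply Bott's residue formula. First I would observe that, since $\dim_\C M = n$ forces $\rho_\O^{\,n+1}=0$, setting $\Phi := \rho_\O + i\,\div X$ gives
\[
\int_M \Phi^{n+1} = (n+1)\,i\int_M \div X\,\rho_\O^{\,n} = (n+1)\,i\,f(X),
\]
because the only $(n,n)$-component of the inhomogeneous form $\Phi^{n+1}$ is $(n+1)(i\,\div X)\rho_\O^{\,n}$. Thus the whole problem reduces to evaluating the characteristic number $\int_M\Phi^{n+1}$.

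Second, I would verify that $\Phi$ is closed for the holomorphic equivariant differential $D := \bp + i(X)$. Indeed $\rho_\O$ is of type $(1,1)$ and closed, so $\bp\rho_\O=0$, while $\rho_\O = -i\p\bp\log a$ turns (\ref{proof4}) into $\bp\,\div X = i\,i(X)\rho_\O$; hence
\[
D\Phi = i(X)\rho_\O + i\,\bp\,\div X = i(X)\rho_\O - i(X)\rho_\O = 0 .
\]
This exhibits $\Phi$ as the trace part of the equivariant curvature built from the Hermitian connection on $TM$: with $L(X)Y=\n_XY-[X,Y]=\n_YX$ of (\ref{proof13}) and the curvature $\Theta$, the endomorphism $L(X)+\tfrac{i}{2\pi}\Theta$ is the equivariant extension of $\tfrac{i}{2\pi}\Theta$, and $\tr\big(L(X)+\tfrac{i}{2\pi}\Theta\big)$ recovers $\Phi$ up to normalization constants. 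The key input making this work is precisely the moment relation (\ref{proof4}).

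Third, I would apply Bott's localization theorem for the differential $D$ to the $D$-closed form $\Phi^{n+1}$, whose fixed locus is $\mathrm{zero}(X)=\bigsqcup_\lambda Z_\lambda$. The hypothesis that $L^\nu(X)$ be nonsingular at every zero is exactly what makes the equivariant Euler class of the normal bundle $\nu(Z_\lambda)$, namely $\det\big(L^\nu(X)+\tfrac{i}{2\pi}K\big)$, invertible: its degree-zero part $\det L^\nu(X)$ is nonvanishing, so the inverse is a terminating sum of forms. Concretely the localization is effected by excision: on $M\setminus\bigcup_\lambda Z_\lambda$ the field $X$ is nowhere zero, so $\Phi^{n+1}$ becomes $D$-exact via a primitive assembled from the connection and a form dual to $X$, and Stokes' theorem over the complement of small tubular neighborhoods reduces the integral to residues along the $Z_\lambda$. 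Each residue equals $\int_{Z_\lambda}\big((\div X+c_1(M))^{n+1}|_{Z_\lambda}\big)\big/\det\big(L^\nu(X)+\tfrac{i}{2\pi}K\big)$, and collecting the factor $1/((n+1)i)$ from the first step with the powers of $2\pi$ from the Chern normalization $\rho_\O=2\pi\,c_1(M)$ produces the constant $(\tfrac{1}{2\pi})^n(n+1)$ and the stated identity.

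The main obstacle is the case of positive-dimensional zero sets. For isolated nondegenerate zeros the residue is a pointwise Jacobian $\phi(L(X))/\det L(X)$ and the argument is classical; for higher-dimensional $Z_\lambda$ one needs the full equivariant Thom/Euler class of $\nu(Z_\lambda)$ and must check that its equivariant Euler form is exactly $\det\big(L^\nu(X)+\tfrac{i}{2\pi}K\big)$ and that $L^\nu(X)$ correctly encodes the transverse linearization of $X$. The second, more pedestrian difficulty is tracking the normalization: reconciling the analytic form $\rho_\O=2\pi c_1(M)$ appearing in $f$ with the cohomological $c_1(M)$ and the $\tfrac{i}{2\pi}$-normalized curvature $K$ in the denominator is what fixes all the powers of $2\pi$ and the factor $i$ in the final constant.
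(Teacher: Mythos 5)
The paper itself offers no proof of this statement: Theorem \ref{zero1} is quoted verbatim from Theorem 5.2.8 of \cite{futaki88} and is used in Section 4 only as a computational tool, so there is no in-paper argument to compare against. Your sketch follows the same route as that reference: realizing $(n+1)f(X)$, up to constants, as the integral of the top power of the equivariantly closed form $\Phi=\rho_\O + i\,\div X$ (your degree count $\int_M\Phi^{n+1}=(n+1)\,i\,f(X)$ and the verification $D\Phi=0$ via (\ref{proof4}) are both correct), and then invoking Bott-style localization at the components of $\mathrm{zero}(X)$, with nondegeneracy of $L^\nu(X)$ guaranteeing invertibility of the equivariant Euler class $\det(L^\nu(X)+\frac{i}{2\pi}K)$. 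Two caveats: the substantive content of the theorem --- the excision/Stokes argument producing the residues for positive-dimensional $Z_\lambda$ and the identification of the equivariant Euler form of $\nu(Z_\lambda)$ --- is invoked rather than carried out (which is acceptable here, since the statement is itself a citation); and your parenthetical identity $\nabla_XY-[X,Y]=\nabla_YX$ requires a torsion-free connection, which the Chern connection of a non-K\"ahler Hermitian metric is not, though this does not affect the structure of the argument since only the value of $L(X)$ along $\mathrm{zero}(X)$ enters the formula.
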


We provide an explicit computation of non-zero invariant on the blow-up at a point of a Hopf surface which by \cite{tr, vu} is an LCK manifold, using the localization formula. 

Let $H^2$ be a Hopf surface that we regard as $\CM^2\setminus \{0\}/\ZM$, where $\ZM$ is generated by the transformation $(z_1,z_2)\mapsto (2z_1,2z_2)$. We choose the fundamental domain on $\CM^2\setminus \{0\}$ to be  $\{(z_1,z_2)\ |\ 1 \leq |z_1|^2+|z_2|^2\leq 2\}$.

Let $M$ be the blow-up of $H^2$ at the point $(0, \frac 32)$. It will be convenient to change the coordinates $(z_1,z_2)$ into $(w_1,w_2)$ by:
$$w_1=z_1,\quad w_2=z_2-\frac 32.$$
Then the blow-up takes place at the origin $(w_1,w_2)=(0,0)$ and the  exceptional divisor $E$ is $\{(w_1:w_2)\}\cong \CM P^1\subset M$.

Let $X=z_1\frac \p{\p z_1}$ be the radial (global) vector field on $\CM^2$. Its zero set contains $(0,\frac 32)$. We shall equally denote by $X$ its lift to $M$. 
Its zero set on $M$ will certainly contain $\{z_1=0\}$, but also some other point that we now determine.

Take first local coordinates on $M$ around $(1:0)\in E$ to be 
$$\z_1=w_1, \quad \z_2=\frac{w_2}{w_1}.$$
This change of coordinates is consistent with the coordinates on the exceptional divisor. In the new coordinates, $X$ is written as
$$X=\z_1\frac \p{\p \z_1}-\z_2\frac \p{\p \z_2}.$$
In these coordinates $(\z_1,\z_2)$, $\mathrm{zero}(X)=\left\{ (0,0)\right\}.$ Hence, the zero is on $E$ (as $\z_1=0$). On the other hand, $\z_2=0$ implies $w_2=0$. Thus, the isolated zero of $X$ is $(w_1:w_2)=(1:0)$.

Now recall that, in general, if a holomorphic vector field $Y=a\frac \p{\p\z_1}+b\frac \p{\p\z_2}$, then
\begin{equation}\label{zero}
 L(Y)(\frac \p{\p\z_j})|_{\mathrm{zero}(Y)}=-\Ll_Y \frac \p{\p\z_j}+\nabla_X\frac \p{\p\z_j}=\frac{\p a}{\p\z_j}\frac \p{\p\z_1}+\frac{\p b}{\p\z_j}\frac \p{\p\z_2},
\end{equation}
as the $\nabla_Y=0$ on the zero set of $Y$.

Hence, in our case, for the point $(1:0)$, the localization formula reduces to:
$$\frac{\tr(L(X))^3}{\det(L^\nu(X))}=\frac{\left(\tr\begin{pmatrix}1&0\\0&-1\end{pmatrix}\right)^3}{\det \begin{pmatrix}1&0\\0&-1\end{pmatrix}}=0,$$
as the normal bundle of the point equals the tangent bundle at the point, and this is trivial, hence $\Theta=0$. 

So, the isolated zero does not contribute to the value of the invariant.

For the dimension 1 component of $\mathrm{zero}(X)$, take on $M$, around $(0:1)$, the coordinates:
$$\z_1=w_2,\quad \z_2=\frac{w_1}{w_2}.$$
In these coordinates $X$ takes the form
$$X=\z_2\frac \p {\p\z_2},$$
and $\mathrm{zero}(X)=\left\{(\z_1,0)\right\}$, a line represented by $(0:1)$. It is the proper transform of $\{z_1=0\}$. Using \eqref{zero}, we find now
$$L(X)=\begin{pmatrix}0&0\\0&1\end{pmatrix}.$$
The localization formula gives:
\begin{equation*}
 \int_Z\frac{\left(\tr \left( \begin{pmatrix}0&0\\0&1\end{pmatrix} +\frac{\sqrt{-1}}{2\pi}\Theta\right)\right)^3}{1+\frac{\sqrt{-1}}{2\pi}\Theta^\nu}=
\int_Z\frac{\left(1+c_1(Z)+c_1(\nu(Z))\right)^3}{1+c_1(\nu(Z))},
\end{equation*}
where $\nu(Z)$ is the normal bundle of the zero set $Z=\mathrm{zero}(X)$. 

Observe that $\nu(Z)=-[E]$. Indeed, if $\pi:M \to H^2$ denotes the natural projection, then:
$$[\pi(Z)]=[\pi(Z+E)],$$
and hence (as they are trivial line bundles),
$$0=\pi^*[\pi(Z)]=[Z+E]=[Z]+[E]=[\nu(Z)]+[E].$$
On the other hand, $c_1(Z)=0$, as $Z$ is an elliptic curve. We obtain:
\begin{eqnarray*}
 \int_Z\frac{\left(1+c_1(Z)+c_1(\nu(Z))\right)^3}{1+c_1(\nu(Z))}
&=& \int_Z(1-c_1([E]))^3 (1+c_1([E])))\\
&=& \int_Z(-3c_1([E]))+c_1([E])))\\
&=& Z\cdot (-2[E]))=-2.
\end{eqnarray*}
In conclusion, $3(\frac 1{2\pi})^2f(X)=-2\neq 0$. 





\end{document}